\newtheorem{lemma}{Lemma}
\newtheorem{theorem}{Theorem}
\newtheorem{rem}{Remark}
\newcommand{\p}{\partial}
\begin{document}

\title{Gaussian curvature and gyroscopes}
\author[G. Cox]{Graham Cox$^\ast$}\email{gcox@mun.ca}
\address{Department of Mathematics and Statistics, Memorial University of Newfoundland, St. John's, NL A1C 5S7, Canada}
\author[M. Levi]{Mark Levi$^\ast$}\email{levi@math.psu.edu}
\address{Penn State University Mathematics Dept., University Park, State College, PA 16802}
\thanks{$^\ast$Both authors gratefully acknowledge support from the NSF grant DMS--0605878}
\maketitle

\begin{abstract}
We relate Gaussian curvature to the gyroscopic force, thus giving a mechanical interpretation of the former and a geometrical interpretation of the latter. We do so by considering the motion of a spinning disk constrained to be tangent to a curved surface. It is shown that the spin gives rise to a force on the disk which is equal to the magnetic force on a point charge moving in a magnetic field normal to the surface, {\it  of magnitude equal to the Gaussian curvature, and of charge equal to the disk's axial spin}. In a special case, this demonstrates that the precession of Lagrange's top is due to the curvature of a sphere determined by the parameters of the top.

\end{abstract}

 
\section{Introduction}

Lagrange's top is an axisymmetric rigid body with one point on the axis fixed, subject to a gravitational force \cite{KS08}. It is equivalent to a disk mounted on an axle attached to a ball joint, as in Figure~\ref{fig:disk}(a), or a particle---namely, the disk's center---moving on a sphere under the influence of gravitational and magnetic forces \cite{CB15}. It is the ``magnetic force," i.e. the gyroscopic effect, that causes the top to precess.

\begin{figure}[thb]
 	\captionsetup{format=hang}
	\center{ \includegraphics{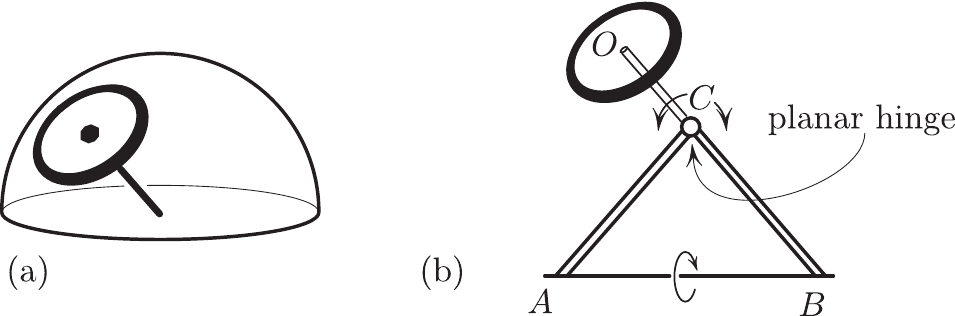} }
	\caption{(a) A spinning top viewed as a disk tangent to a sphere. (b) A disk tangent to a torus. The axle $CO$, constrained to the plane $ABC$, sweeps out a circle in that plane; the torus is generated by revolving this circle about the axis $ AB$.}
	\label{fig:disk}
\end{figure}

In this paper we point out a connection with Gaussian curvature, by showing that the motion of the Lagrange top is the same as the motion of a charged particle in a magnetic field normal to the sphere and equal in magnitude to the Gaussian curvature, with the particle's charge given by the angular momentum of the top around its symmetry axis. 

One might argue that this is just a coincidence, since the geometry of the sphere is so particular. We show that this is no coincidence, and is in fact a reflection of a deeper relationship between curvature and dynamics. To see this, we consider the generalization of the top, namely a spinning massive disk constrained to be tangent to a smooth surface, with the contact point free to slide along the surface, as illustrated in
 Figures~\ref{fig:disk}(b) and \ref{fig:heuristic.general}.
 

\begin{figure}[thb]
 \captionsetup{format=hang}
	\center{  \includegraphics{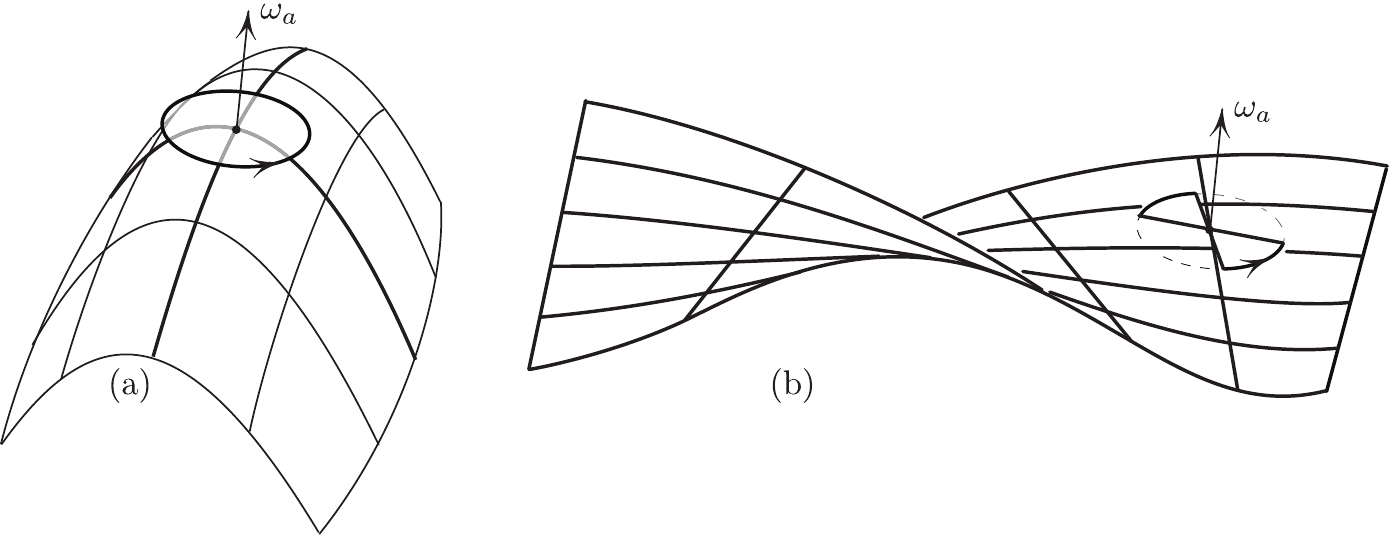}}
	\caption{The rotating tangent disk. The disk intersects the surface in the negative curvature case (b), but this situation is physically realizable, e.g. by the construction of Figure~\ref{fig:disk}(b).}
	\label{fig:heuristic.general}
\end{figure} 
\vskip 0.1 in 
\noindent {\bf  Our main result,} illustrated in Figure~\ref{fig:magforce}, is that the rotation of the disk around its axle produces an additional force acting on the center of the disk, perpendicular to the velocity, of magnitude
\begin{equation} 
	F = LvK
	\label{eq:mainresult}
\end{equation} 
where $ L $ is the axial angular momentum of the disk, $K$ is the Gaussian curvature of the surface, and $v$ is the speed of the disk's center. In other words, the trajectory of the spinning disk's center is the same as that of a non-spinning disk subject to this additional force. This is exactly the same as the Lorentz force acting on a charged particle in  a magnetic field, with $L$ being the charge and $K$ being the strength of the magnetic field. This results in a physical interpretation of Gaussian curvature as a ``magnetic field" and angular momentum as ``charge."

It should be noted that a non-spinning disk is different from a point mass, since the former has some kinetic energy due to the rotation of the tangent plane. However, if the disk has a small radius this portion of energy is small relative to the translational energy, and if it is neglected then the two systems are equivalent. By contrast, if the disk spins rapidly the kinetic energy due to the axial spin cannot be ignored, even when the radius is small. This is discussed in more detail in Section \ref{sec:smalldisk}.

We also point out that the product $ LvK $  is the only homogeneous expression in $L$, $v$ and $K$ that has units of force. 
 \vskip 0.1 in

In the approximation mentioned in the preceding paragraph, that is, for a small, rapidly-spinning disk, there exist coordinates $(x_1,x_2)$ at any given point in which the equations of motion take the form
\[
\begin{split} 
	m \ddot x_1 =- L \dot x_2 K   \\
	m \ddot x_2 = \    L \dot x_1 K. 
\end{split}
\]
 
The result is precisely formulated in Section \ref{sec:results} and proved in Section \ref{sec:classical}, with a heuristic explanation given in Section \ref{sec:heuristic}. We conclude in Section \ref{sec:coordinate.free} by giving a coordinate-free formulation and proof of the main result using the language of modern differential geometry. This provides as a special case an (apparently new) variational derivation of the equations of motion of Lagrange's top.

 \begin{figure}[thb]
 	\captionsetup{format=hang}
	\center{  \includegraphics{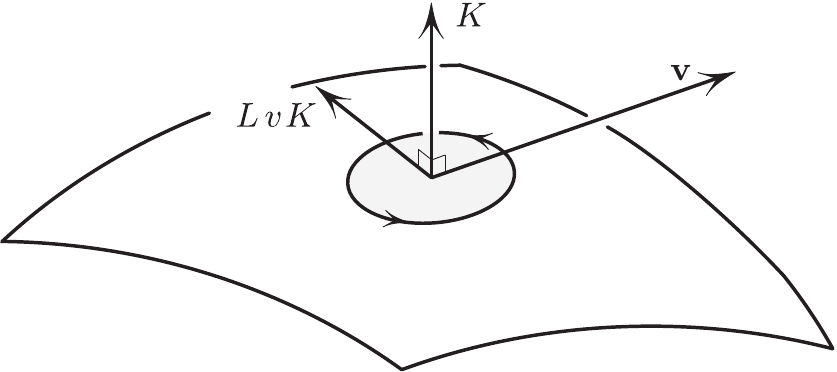}}
	\caption{The main result:   the spinning disk acts as if subject to the ``Lorentz force" $ ${\bf F}$ =  L {\bf v} \times (- {\bf K})$, with ${\bf K}= K {\bf n} $, where ${\bf n}$ is the unit normal vector in the positive direction relative to a chosen orientation of the surface and $K$ is the Gaussian curvature. Thus ${\bf B} =  - {\bf K} $ can be interpreted as the magnetic field, and $q=L$ as the charge. Choosing the opposite orientation will not affect  ${\bf F}$ as both ${\bf n}$ and $L$ would change sign. 
	}
	\label{fig:magforce}
\end{figure} 
\vskip 0.1 in 
\begin{rem}  
The result gives further motivation to the problem of studying curves for which the geodesic curvature is a constant multiple of the Gaussian curvature: $ k = cK $. Indeed, the geodesic curvature is $ k = a/v ^2 $, where $a$ is the acceleration due to the magnetic force. Substituting $ a = m ^{-1} LvK$ from \eqref{eq:mainresult}, we get 
\[
	k = \frac{L}{mv} K.
\] 
Note that $ v$ is constant since the force is perpendicular to the velocity, and the constant $c$ is precisely the ratio of angular momentum to linear momentum. 
This is a subclass of a wider class of problems on ``magnetic geodesics" considered by Arnold \cite{A88} and many others; see \cite{GG09} and references therein.
\end{rem}

\vskip 0.1 in 
\begin{rem} 
We arrived at the spinning disk problem through the following series of associations. In analyzing the dynamics of a tight binary orbiting a larger star we were led naturally to a simplified problem: a dumbbell, such as a pair of tethered satellites, spinning and orbiting a gravitational center (this work is to appear elsewhere). This in turn led us to consider a ``geodesic dumbbell":  two point masses constrained to a surface and a fixed small geodesic distance apart. An interesting effect in all of these problems is the appearance of a ``magnetic" force due to the spin. (Incidentally, such a ``magnetic" force appears also for a single point mass if the force field itself is spinning, as pointed out in \cite{KL2015}). The spinning disk considered here is a simplification of the geodesic dumbbell, chosen to demonstrate the magnetic effect with a minimum of technicalities.
\end{rem} 
\section{Results}
\label{sec:results}

We now formulate the main result of the paper---the equations of motion of the spinning disk on a curved surface. In order to state the result we first write the kinetic energy of the disk in a special coordinate system, then interpret geometrically the resulting Euler--Lagrange equations.

\subsection{Classical formulation}
Let $ (x_1, x_2) $ be a local rectangular coordinate system on the surface,  so that the lines 
$\{x_1 = {\rm const.}\}$ are orthogonal to the lines $\{x_2 = {\rm const.}\}$. 
We take these coordinates to be right-handed with respect to a chosen orientation of the surface. 
The metric $ds^2$ on the surface is then given by 
\begin{equation} 
	ds^2= a_{11} dx_1^2 +a_{22} dx_2^2.
	\label{eq:metric}
\end{equation} 
Let $x=(x_1,x_2)$ denote the coordinates of the disk's center.

The rotational part of the kinetic energy of the disk is $\frac{1}{2} \left< I \boldsymbol{\omega}, \boldsymbol{\omega} \right>$, where $I$ is the tensor of inertia of the disk around its center of mass and $\boldsymbol{\omega}$ is  the angular velocity. Decomposing the latter along the axial direction and the rest, we obtain the total kinetic energy (translational plus rotational) as 
\begin{equation} 
	E = \frac{1}{2} m v ^2 + \frac{1}{2} I_a \omega _a ^2 +\frac{1}{2} I_d \omega _d ^2,
	\label{eq:Edisk}
\end{equation} 
where $ \omega _a $ and $ \omega _d$ are the scalar values of the two projections of $\boldsymbol{\omega}$ (axial and along a diameter), $m$ is the mass of the disk, and $v  $ is the speed of the center of the disk: $ v ^2 = \langle A \dot x, \dot x\rangle $, where 
$ A = {\rm diag}(a_{11}, a_{22}) $ and where $ \langle\cdot\,, \cdot \rangle $ denotes the Euclidean inner product. It follows that a non-spinning\footnote{I.e. $\omega_a=0$. Geometrically this means any material radius-vector of the disk   undergoes parallel transport along the curve $x(t)$.} disk has kinetic energy
\begin{equation} 
	T = \frac{1}{2} m v ^2 + \frac{1}{2} I_d \omega _d ^2, 
	\label{eq:Tdisk}
\end{equation} 
and its center moves according to
\[
	\frac{d}{dt} T_{\dot x} - T_x =0.
\]

We now describe what happens when the disk has non-zero angular momentum around its axle. 

\begin{theorem}\label{thm:coordinate}
Define $T = T(x,\dot x)$ as in \eqref{eq:Tdisk} and let $K = K(x)$ denote the Gaussian curvature of the surface. Then the angular momentum $L = I_a \omega_a$ about the disk's axle is a conserved quantity and the motion of the disk's center is governed by 
\begin{equation} 
	\frac{d}{dt} T_{ \dot x }-T_x=\sqrt{ a_{11}a_{22} } L (J \dot x)K, \  \  J = \begin{pmatrix} 0 & -1 \\ 1 & 0 \end{pmatrix}.
	\label{eq:eleq}
\end{equation}    
\end{theorem}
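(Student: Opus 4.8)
\emph{Proof strategy.} The plan is to treat the spinning disk as a natural Lagrangian system on the three–dimensional configuration space $Q=M\times S^1$, where $M$ is the coordinate patch on the surface and the $S^1$ records the disk's rotation about its axle, and then to reduce by the $S^1$–symmetry. Fix the right–handed orthonormal moving frame $e_i=a_{ii}^{-1/2}\,\partial_{x_i}$ on $M$ together with the compatible unit normal $\mathbf n=e_1\times e_2$, and let $\theta$ be the angle between a material frame of the disk and $(e_1,e_2)$ inside the (tangent) plane of the disk. Since the disk's plane is always the tangent plane at the moving center $x(t)$, the disk's angular velocity decomposes as $\boldsymbol\omega=\boldsymbol\Omega_{\mathrm{fr}}+\dot\theta\,\mathbf n$, where $\boldsymbol\Omega_{\mathrm{fr}}$ is the angular velocity of the frame $(e_1,e_2,\mathbf n)$ carried along $x(t)$. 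The first step is to read off its components: dotting $\dot e_1=\boldsymbol\Omega_{\mathrm{fr}}\times e_1$ with $e_2$ gives the axial part $\langle\boldsymbol\Omega_{\mathrm{fr}},\mathbf n\rangle=\langle\nabla_{\dot x}e_1,e_2\rangle=:\alpha(x,\dot x)$, which is the Levi–Civita connection $1$–form and, crucially, is \emph{linear in $\dot x$}; while the components of $\boldsymbol\Omega_{\mathrm{fr}}$ tangent to the surface are $\langle\dot{\mathbf n},e_i\rangle$, hence built from the shape operator and depending only on $(x,\dot x)$. Consequently $\omega_a=\dot\theta+\alpha(x,\dot x)$, the diameter component $\omega_d$ is some function of $(x,\dot x)$ alone, and the kinetic energy \eqref{eq:Edisk} becomes
\[
	E(x,\theta,\dot x,\dot\theta)\;=\;T(x,\dot x)\;+\;\tfrac12 I_a\bigl(\dot\theta+\alpha(x,\dot x)\bigr)^2,
\]
with $T$ precisely the non–spinning energy \eqref{eq:Tdisk}. (The explicit form of $\omega_d$, which is extrinsic, never enters what follows.)

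Because $E$ is independent of $\theta$, the angle $\theta$ is cyclic and its conjugate momentum $\partial E/\partial\dot\theta=I_a(\dot\theta+\alpha)=I_a\omega_a=L$ is conserved; this is the first assertion of the theorem. For the reduced equations I would expand the Euler–Lagrange equation $\frac{d}{dt}E_{\dot x}-E_x=0$, writing $\alpha=\sum_j b_j(x)\,\dot x_j$. Then $E_{\dot x_i}=T_{\dot x_i}+L\,b_i$ and $E_{x_i}=T_{x_i}+L\sum_j(\partial_i b_j)\,\dot x_j$, so every term but $T$ collapses into the antisymmetric combination
\[
	\frac{d}{dt}T_{\dot x}-T_x\;=\;L\,(\partial_1 b_2-\partial_2 b_1)\begin{pmatrix}\dot x_2\\ -\dot x_1\end{pmatrix};
\]
that is, the "potential" $\alpha$ enters the dynamics only through its exterior derivative $d\alpha=db$, exactly as a magnetic vector potential enters only through its curl. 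Note that $\begin{pmatrix}\dot x_2\\ -\dot x_1\end{pmatrix}=-J\dot x$.

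It then remains to identify $d\alpha$ with curvature. By Cartan's structure equations — equivalently, the classical formula for $K$ in orthogonal coordinates — one has $d\alpha=-K\,dA$, where $dA=\sqrt{a_{11}a_{22}}\,dx_1\wedge dx_2$ is the area form, i.e. $\partial_1 b_2-\partial_2 b_1=-\sqrt{a_{11}a_{22}}\,K$. Substituting into the previous display yields $\frac{d}{dt}T_{\dot x}-T_x=\sqrt{a_{11}a_{22}}\,L\,(J\dot x)\,K$, which is \eqref{eq:eleq}.

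I expect the genuine work here to be bookkeeping rather than conceptual. One must justify cleanly that the disk's angular velocity really splits as $\boldsymbol\Omega_{\mathrm{fr}}+\dot\theta\,\mathbf n$ and that the diameter–rotation energy is a function of $(x,\dot x)$ only, so that it rides along inside $T$ and does not interfere with the $\theta$–reduction; and one must pin down every orientation and sign convention (direction of $\mathbf n$, sign of the connection $1$–form, orientation in Cartan's equations) so that $d\alpha$ comes out with exactly the sign producing the ``$+$'' and the particular $J$ in \eqref{eq:eleq}. A convenient consistency check is the round sphere in geodesic polar coordinates $ds^2=dr^2+\sin^2 r\,d\phi^2$: there $\alpha=\cos r\,d\phi$, $d\alpha=-\sin r\,dr\wedge d\phi=-dA$, recovering $K=1$, and the reduced equation reproduces the precession of Lagrange's top.
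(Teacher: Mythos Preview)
Your proposal is correct and follows essentially the same route as the paper: write $\omega_a=\dot\theta+f(x)\dot x$ with $f$ the connection form in the orthonormal coordinate frame, observe that $\theta$ is cyclic so $L=I_a\omega_a$ is conserved, reduce the Euler--Lagrange equations so that only the curl $\partial_1 f_2-\partial_2 f_1$ survives, and identify this curl with $-\sqrt{a_{11}a_{22}}\,K$. The only difference is packaging of that last identification: the paper writes $f$ in terms of the geodesic curvatures of the coordinate lines and proves the curvature identity by applying Gauss--Bonnet to an infinitesimal coordinate rectangle, whereas you invoke Cartan's structure equation $d\alpha=-K\,dA$ directly---two names for the same fact.
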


To uncover the intrinsic meaning of the equations of motion (\ref{eq:eleq}), fix a point $P$ on the trajectory and let   $(x_1,x_2)$ be the Cartesian coordinates in the plane tangent to the surface at $P$; the nearby positions on the trajectory are then given by the coordinates $(x_1,x_2)$ of the orthogonal projection onto this tangent plane.  
  In these coordinates,   $ a_{11}=a_{22}=1$ at $P$, and \eqref{eq:eleq} simplifies to
  \begin{equation} 
	\frac{d}{dt} T_{ \dot x }-T_x=L(J \dot x)K  \  \  \  \  \hbox{at}  \  \  P.
	\label{eq:eleq1}
\end{equation}   
This confirms our earlier claim that the axial spin amounts precisely to the addition of a ``magnetic field" orthogonal to the surface and of magnitude $K$.

\begin{rem}
It is possible to include potential energy terms in $T$, allowing for external forces such as gravity, but these have no bearing on our result as they do not affect the ``magnetic" term. 
\end{rem}

\subsection{The small, rapidly-spinning disk}\label{sec:smalldisk}
Further simplification occurs if we ignore the last term in \eqref{eq:Edisk}. This approximation is justified if the radius of the disk is small in the following sense. Since $|\omega_d| \leq \|h\| v$, where $h$ denotes the second fundamental form of the surface, we obtain
\[
	\frac{I_d \omega_d}{mv^2} \leq \frac{R^2 \|h\|^2}{4}
\]
where $R$ is the radius of the disk. Thus the translational kinetic energy dominates the energy of rotation around the diameter when $R\|h\| \ll 1$. Moreover, since $I_a = 2I_d$, the second term in \eqref{eq:Edisk} will be much larger than the third if $\omega_a$, the axial spin, is large relative to $\omega_d$, i.e. $\omega_d/\omega_a \ll 1$.

We thus define the \textit{small, rapidly-spinning disk} to be the system with kinetic energy
\begin{equation}
	E = \frac{1}{2} m v ^2 + \frac{1}{2} I_a \omega _a ^2.
	\label{eq:E1disk}
\end{equation}
In this case the center moves {\it precisely} like a particle of charge $ L$ in a magnetic field of strength $ K $. In particular, \eqref{eq:eleq1} becomes
\begin{align}
	m \ddot x = L (J \dot x) K  \  \  \  \  \hbox{at}  \  \  P,  
\end{align}
or equivalently
\begin{align}
\begin{split}\label{eq:P}
	m \ddot x_1 =- L\dot x_2 K     \\
	m \ddot x_2 = \  L\dot x_1 K  . 
\end{split}
\end{align}



\subsection{An application to the Lagrange top} We now describe more explicitly the role of Gaussian curvature in the dynamics of the Lagrange top.  

The configuration space $SO(3)$ can be parametrized by Euler angles, which we denote by $x_1$, $x_2$ and $\theta$, as shown in Figure~\ref{fig:eulerangles} (thus breaking with the tradition of using Greek letters exclusively). Here $(x_1, x_2)$ are the coordinates of the point of intersection of the axle with a sphere,
and $\theta $ is the angle between the parallel on the sphere through $P$ and a vector ${\bf u}$ rigidly attached to the top and normal to the top's  axle. The kinetic energy is
\begin{equation} 
	E_{\rm top} = \frac{1}{2} I_1 (\dot x _1 ^2 + \dot x _2^2 \sin^2 x_1) + \frac{1}{2} I_3(\dot \theta    + \dot  x_2   \cos x_1  )^2,
	\label{eq:etop}
\end{equation}   
where $I_3$ is the moment of inertia about the top's axle, and $I_1$ is the moment of inertia about any perpendicular axis. The first term in \eqref{eq:etop} coincides with  the expression for the kinetic energy  of a point mass $m$ on the sphere of radius $R$, namely 
$ \frac{1}{2} m R^2 (\dot x _1 ^2 + \dot x _2^2 \sin^2 x _1)$, provided  
\begin{equation} 
	m R ^2 = I_1 . 
	\label{eq:mr2}
\end{equation}
This still leaves some freedom in choosing $m$ and $R$. We use this freedom by insisting that the potential energy (which we excluded so far) of our point mass equal the potential energy of the top: $mg R \cos x_1=Mg \ell \cos x_1$, where $M$ denotes the top's mass, and $\ell$ the distance from the origin to the top's center of mass. This and   (\ref{eq:mr2})  fixes $R$ and $m$: 
\begin{equation}\label{Rdef}
	R = \frac{I_1}{M \ell}, \quad m = \frac{M^2 \ell^2}{I_1}.
\end{equation}
To summarize, to any Lagrange top we assign a radius $R$ and a point mass $m$, so the kinetic energy of the top becomes  
 \begin{equation} 
	E_{\rm top} = 
	\frac{1}{2}m v^2 + \frac{1}{2} I_3 \omega_a ^2 . 
	\label{eq:etop1}
\end{equation}   
This  is precisely the form of \eqref{eq:E1disk} with $I_a = I_3$. That is, the Lagrange top turns out to be an exact incarnation of the ``small, rapidly spinning disk" moving on a sphere of radius $R$, as given by \eqref{Rdef}.
According to Theorem \ref{thm:coordinate}, the  point at which the axle punctures the sphere behaves as if it were subject to a Lorentz force of magnitude $L v  K$, where $L$ is the axial angular momentum and where $ K = 1/R^2$ is the Gaussian curvature of the sphere.

 \begin{figure}[thb]
 	\captionsetup{format=hang}
	\center{  \includegraphics{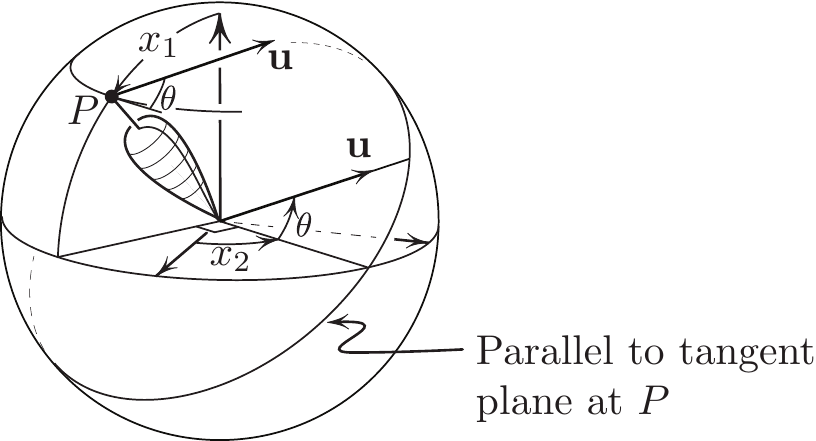}}
	\caption{Euler's angles: $ x_1 = \pi/2\,-\, $latitude; $ x_2=\pi/2\,-\,$longitude; and $ \theta = $ the angle of a chosen vector ${\bf u}$   with the parallel $\{x_1 = {\rm const.}\}$. }
	\label{fig:eulerangles}
\end{figure}

\subsection{Intrinsic formulation}\label{sec:intrinsic}
%
%
%


The equations of motion \eqref{eq:eleq} depend on local coordinates, which tends to obscure their geometric nature. On the other hand, the simplified forms \eqref{eq:eleq1} and \eqref{eq:P}, while geometrically transparent, are only valid in a particular coordinate system at a single point. We address this concern by writing the equations of motion in an invariant manner, retaining the special assumption of a small, rapidly-spinning disk.

The center of the disk moves along a parametrized curve $\gamma$ on the surface, with velocity $\dot\gamma$. Letting $D_t$ denote the covariant derivative operator along $\gamma$, the acceleration of the center is $D_t \dot\gamma$, and so the equation of a geodesic is $D_t \dot\gamma = 0$. In these notations Theorem \ref{thm:coordinate} can be reformulated in the following coordinate--free way.

\begin{theorem}\label{thm:coordinate.free}
The center of the small, rapidly-spinning disk moves according to
\begin{align}\label{eqn:intrinsic}
	m D_t \dot\gamma &= L (J \dot\gamma)K,
\end{align}
where $L$ is constant and $J$ denotes counterclockwise rotation by $ \pi/2 $ in the tangent plane.
\end{theorem}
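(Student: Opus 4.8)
The plan is to show that \eqref{eqn:intrinsic} is exactly the invariant repackaging of equation \eqref{eq:eleq} of Theorem~\ref{thm:coordinate}, so that no new dynamics need be derived; what must be done is to recognize each side of \eqref{eq:eleq} as a tensorial object written in the particular orthogonal chart $(x_1,x_2)$. Since \eqref{eqn:intrinsic} is an identity between vector fields along $\gamma$, it suffices to verify it in one such chart around each point, and conservation of $L$ is already asserted in Theorem~\ref{thm:coordinate}.

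For the left-hand side I would invoke the standard fact that the Euler--Lagrange operator of the kinetic Lagrangian $T=\tfrac12 m\langle A\dot x,\dot x\rangle$ of the metric \eqref{eq:metric} is $m$ times the covariant acceleration with its index lowered: in coordinates $\frac{d}{dt}T_{\dot x}-T_x = mA\,[D_t\dot\gamma]$, where $[\,\cdot\,]$ denotes coordinate components and $A=\operatorname{diag}(a_{11},a_{22})$. (This is immediate: the case ``$=0$'' is the geodesic equation $D_t\dot\gamma=0$, and the general expression differs from $mA\ddot x$ only by the terms that reassemble the Christoffel symbols into $mA\Gamma(\dot x,\dot x)$.) For the right-hand side I would compute the coordinate form of the geometric rotation $J$ using the positively oriented orthonormal frame $e_i = a_{ii}^{-1/2}\partial_i$ (positively oriented because the chart is right-handed): from $Je_1=e_2$, $Je_2=-e_1$ one gets $[J\dot\gamma] = A^{-1}\sqrt{a_{11}a_{22}}\,J_{\mathrm{mat}}[\dot x]$, where $J_{\mathrm{mat}}=\left(\begin{smallmatrix}0&-1\\1&0\end{smallmatrix}\right)$. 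Hence the right-hand side of \eqref{eq:eleq}, namely $\sqrt{a_{11}a_{22}}\,LK\,J_{\mathrm{mat}}[\dot x]$, equals $A\,[LK\,J\dot\gamma]$. Equating the two sides of \eqref{eq:eleq} gives $mA\,[D_t\dot\gamma] = A\,[LK\,J\dot\gamma]$; cancelling the invertible matrix $A$ yields \eqref{eqn:intrinsic}.

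Alternatively — and this is the route that also produces the promised variational derivation of the Lagrange top — I would set the problem up intrinsically from the start on the circle bundle over the surface whose fibre records the disk's angular position, with fibre coordinate $\theta$ measured relative to a parallel-transported frame, so that (as in \eqref{eq:etop}) the axial angular velocity is $\omega_a=\dot\theta+\alpha(\dot\gamma)$ with $\alpha$ the Levi--Civita connection $1$-form in a local trivialization. The Lagrangian $\tfrac12 m\|\dot\gamma\|^2+\tfrac12 I_a(\dot\theta+\alpha(\dot\gamma))^2$ is invariant under the $S^1$-action rotating the disk, so $L=I_a(\dot\theta+\alpha(\dot\gamma))$ is conserved; Routh reduction then eliminates $\theta$ and leaves the magnetic Lagrangian $\tfrac12 m\|\dot\gamma\|^2+L\,\alpha(\dot\gamma)$ on $TS$, whose Euler--Lagrange equation is the Lorentz law $mD_t\dot\gamma = \bigl(L\,d\alpha\bigr)(\dot\gamma,\cdot)^{\sharp}$. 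The structure equation for a surface gives $d\alpha=-K\,dA$, and $(dA)(\dot\gamma,\cdot)^{\sharp}=J\dot\gamma$, which reproduces \eqref{eqn:intrinsic}; restricting the bundle to $SO(3)\to S^2$ gives the Lagrange top as a special case.

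The main obstacle is not analytic — both routes are pure bookkeeping — but lies in keeping the musical isomorphisms and the orientation-dependent signs straight: in the first route one must track carefully how the ``flat'' on the left and the factor $\sqrt{a_{11}a_{22}}$ on the right conspire to cancel, and in the second route one must pin down once and for all the sign conventions (handedness of the frame, sign of $\alpha$, and whether the structure equation delivers $K$ or $-K$) so that the coefficient in \eqref{eqn:intrinsic} comes out as $+LK\,J\dot\gamma$ rather than its negative. As the caption of Figure~\ref{fig:magforce} observes, an overall sign here is merely a choice of orientation, so the convention must be fixed at the outset and carried through consistently.
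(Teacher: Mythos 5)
Your first route is correct, and it is genuinely different from the proof the paper actually gives. You derive Theorem \ref{thm:coordinate.free} as a corollary of Theorem \ref{thm:coordinate} by pure tensorial bookkeeping: the identities $\frac{d}{dt}T_{\dot x}-T_x=mA\,[D_t\dot\gamma]$ and $[J\dot\gamma]=A^{-1}\sqrt{a_{11}a_{22}}\,J_{\mathrm{mat}}[\dot x]$ both check out in a right-handed orthogonal chart, so cancelling $A$ in \eqref{eq:eleq} does give \eqref{eqn:intrinsic}, with conservation of $L$ inherited from Theorem \ref{thm:coordinate}. The only caveat is that Theorem \ref{thm:coordinate} as stated uses $T$ from \eqref{eq:Tdisk}, which still contains the $\tfrac12 I_d\omega_d^2$ term, whereas Theorem \ref{thm:coordinate.free} concerns the idealized system \eqref{eq:E1disk}; you should say explicitly that the derivation of \eqref{eq:el.x1} in Section \ref{sec:classical} never uses the specific form of $T$, so it applies verbatim with $T=\tfrac12 m\langle A\dot x,\dot x\rangle$ --- a one-line remark the paper itself makes after \eqref{eq:Edisk1}. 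By contrast, the paper's Section \ref{sec:coordinate.free} proof is deliberately \emph{not} a repackaging of Theorem \ref{thm:coordinate}: it is a self-contained first-variation computation on the unit tangent bundle, with action $\mathcal{S}(\gamma,u)=\tfrac12\int(m|\dot\gamma|^2+I_a|D_tu|^2)\,dt$ and $\omega_a=\langle D_tu,Ju\rangle$, where the curvature enters through the commutation $D_sD_tu=D_tD_su+R(V,\dot\gamma)u$ and the surface form of $R$, and which yields the extra equation $D_t^2u=0$, hence constancy of $\omega_a$, without invoking Lemma \ref{lemma.K} or the Gauss--Bonnet argument at all; this independence is what makes it an alternative proof of Theorem \ref{thm:coordinate} and what delivers the advertised variational derivation of the Lagrange top. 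Your second route (Routh reduction on the circle bundle with connection form $\alpha$ and $d\alpha=-K\,dA$) is essentially equivalent in content to the paper's variational proof and would also deliver these dividends, but as you yourself note it hinges on pinning down the sign conventions for $\alpha$ and the structure equation --- exactly the point the paper settles concretely, via the explicit Riemann-tensor computation in Section \ref{sec:coordinate.free} and via Lemma \ref{lem:pt} and Lemma \ref{lemma.K} in the coordinate proof --- so as written it is a sketch rather than a complete argument. In short: your route 1 is the cheapest correct proof but buys nothing beyond Theorem \ref{thm:coordinate}; the paper's (and your route 2's) variational approach costs more setup but gives an independent, coordinate-free derivation with the conservation law built in.
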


This should be read as a coordinate-free version of \eqref{eq:P}: the left-hand side is mass times acceleration and the right-hand side is the magnetic force acting on the disk's center. Further details and the proof are given in Section \ref{sec:coordinate.free}.

\section{The equations of motion: coordinate version}
\label{sec:classical}
In this section we   show how the Gaussian curvature arises in the Euler--Lagrange equations, thus proving Theorem 1. 

In order to write the kinetic energy $E$ in coordinates, we choose a local rectangular coordinate system $ (x_1, x_2) $ on the surface. Marking a particular radius on the disk, denote by $\theta$ the angle this radius makes with the positive direction of the lines $ \{x_2 = {\rm const.} \} $. Thus the triple $ (x_1, x_2, \theta)$ parametrizes (locally) the configuration space of the disk. Having thus coordinatized the configuration space, we need to find an expression for  $\omega _a$.  To that end we need an expression of the parallel transport (which corresponds to $ \omega_a=0 $); this is provided by the following lemma.

\begin{lemma}\label{lem:pt}
The parallel transport of a tangent vector along a curve $x(t)=(x_1 (t),x_2(t))$ on the surface is given by 
\begin{equation} 
	  \dot    \theta   = - \sqrt{ a_{11} } k_ 1 \dot x_1(t)  -  \sqrt{ a_{22}}k_2  \dot x_2 (t) , 
	\label{eq:paralleltransport}
\end{equation} 
where $k_ 1$ and $k_2$ are the geodesic curvatures of the coordinate lines $\{x_2= {\rm const.}\}$ and $\{x_1= {\rm const.}\}$, respectively, and $ a_{ii} $ are the coefficients of the metric tensor   (\ref{eq:metric}). 
\end{lemma}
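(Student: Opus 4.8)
The plan is to compute the connection $1$-form of the orthogonal frame adapted to the coordinate lines and read off the parallel transport equation for the angle $\theta$. Introduce the orthonormal coframe $e^1 = \sqrt{a_{11}}\,dx_1$, $e^2 = \sqrt{a_{22}}\,dx_2$, dual to the orthonormal frame $E_1 = a_{11}^{-1/2}\,\partial_{x_1}$, $E_2 = a_{22}^{-1/2}\,\partial_{x_2}$. A tangent vector making angle $\theta$ with the $\{x_2 = \text{const.}\}$ lines---that is, with $E_1$---is $w = \cos\theta\, E_1 + \sin\theta\, E_2$. For $w$ to be parallel along $x(t)$, the covariant derivative $D_t w$ must vanish. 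Writing the Levi-Civita connection in this frame, $D_{E_i} E_1 = \omega_{1}{}^{2}(E_i) E_2$ and $D_{E_i} E_2 = -\omega_{1}{}^{2}(E_i) E_1$ where $\omega_1{}^2 = -\omega_2{}^1$ is the single independent connection form of a surface, one finds $D_t w = \big(\dot\theta + \omega_1{}^2(\dot x)\big)(-\sin\theta\, E_1 + \cos\theta\, E_2)$. Hence parallel transport is exactly
\[
  \dot\theta = -\,\omega_1{}^2(\dot x) = -\,\omega_1{}^2(E_1)\sqrt{a_{11}}\,\dot x_1 \;-\; \omega_1{}^2(E_2)\sqrt{a_{22}}\,\dot x_2.
\]

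The remaining task is to identify $\omega_1{}^2(E_1)$ and $\omega_1{}^2(E_2)$ with the geodesic curvatures $k_1$ and $k_2$ of the coordinate lines. This is the conceptual heart of the argument. The curve $\{x_2 = \text{const.}\}$ has unit tangent $E_1$, so its geodesic curvature is $k_1 = \langle D_{E_1} E_1, E_2\rangle = \omega_1{}^2(E_1)$ (up to the sign fixed by orientation and by the convention that $k_1$ measures bending toward increasing $x_2$); similarly the curve $\{x_1 = \text{const.}\}$ has unit tangent $E_2$ and geodesic curvature $k_2 = \langle D_{E_2} E_2, E_1\rangle = -\omega_1{}^2(E_2)$. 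Substituting $\omega_1{}^2(E_1) = k_1$ and $\omega_1{}^2(E_2) = -k_2$ into the displayed formula gives
\[
  \dot\theta = -\sqrt{a_{11}}\,k_1\,\dot x_1 \;-\; \sqrt{a_{22}}\,k_2\,\dot x_2,
\]
which is \eqref{eq:paralleltransport}. Alternatively, one can bypass the frame formalism entirely by using the classical formulas $k_1 = -\tfrac{1}{2\sqrt{a_{22}}}\,\partial_{x_2}\!\big(\log a_{11}\big)\big/\sqrt{a_{11}}$-type expressions for geodesic curvatures of coordinate lines in an orthogonal net, together with the Christoffel symbols of the diagonal metric \eqref{eq:metric}, and check that the $\theta$-evolution from $\Gamma^k_{ij}\dot x^i(\cos\theta\,\partial_1\text{-components})$ reproduces the same right-hand side.

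The step I expect to be the main obstacle is not any computation but the careful bookkeeping of signs and orientation conventions: what exactly "the angle $\theta$ with the positive direction of $\{x_2 = \text{const.}\}$" means, which rotational sense is positive (the paper has fixed right-handed coordinates and $J$ = counterclockwise rotation by $\pi/2$), and the sign conventions in the definitions of $k_1, k_2$ as geodesic curvatures of oriented coordinate lines. Getting a single sign wrong would flip the direction of the induced "magnetic force" later, so the proof must pin these down explicitly—most cleanly by fixing the orientation via $E_1 \wedge E_2 > 0$ and declaring $k_1, k_2$ to be the components $\langle D_{E_1}E_1, E_2\rangle$ and $\langle D_{E_2}E_2, E_1\rangle$ respectively, then verifying consistency with the rotational convention for $\theta$.
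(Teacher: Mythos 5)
Your route is genuinely different from the paper's: the paper proves \eqref{eq:paralleltransport} by applying Gauss--Bonnet to an infinitesimal triangle whose legs are coordinate lines and whose hypotenuse is an arc of $x(\cdot)$, summing the angle changes $\Delta_i\theta=-k_i\,ds_i$ along the legs, whereas you compute the connection form of the orthonormal frame $E_1=a_{11}^{-1/2}\partial_{x_1}$, $E_2=a_{22}^{-1/2}\partial_{x_2}$ and read off $\dot\theta=-\omega_1{}^2(\dot x)$. Your frame computation $D_t w=\bigl(\dot\theta+\omega_1{}^2(\dot x)\bigr)(-\sin\theta\,E_1+\cos\theta\,E_2)$ and the expansion $\omega_1{}^2(\dot x)=\sqrt{a_{11}}\,\dot x_1\,\omega_1{}^2(E_1)+\sqrt{a_{22}}\,\dot x_2\,\omega_1{}^2(E_2)$ are correct, and this approach has the advantage of making the signs a finite, checkable computation (it is also essentially the argument the paper invokes later to identify $\omega_a=\langle D_t u,Ju\rangle$ in the intrinsic formulation); the paper's choice has the advantage that the same Gauss--Bonnet idea is reused in Lemma \ref{lemma.K}.

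However, as written your proof contains exactly the sign inconsistency you flagged as the main obstacle, and it is not resolved. With your stated convention $k_2=\langle D_{E_2}E_2,E_1\rangle$ you correctly get $\omega_1{}^2(E_2)=-k_2$, but then the substitution yields $\dot\theta=-\sqrt{a_{11}}\,k_1\dot x_1+\sqrt{a_{22}}\,k_2\dot x_2$, not \eqref{eq:paralleltransport}; the final display silently flips that sign. The convention the lemma (and the rest of the paper) requires is the $J$-signed geodesic curvature with each coordinate line run in the direction of its increasing coordinate, $k_i=\langle\nabla_{E_i}E_i,JE_i\rangle$, i.e. $\nabla_{E_1}E_1=k_1E_2$ but $\nabla_{E_2}E_2=-k_2E_1$ (since $JE_2=-E_1$); then $\omega_1{}^2(E_2)=-\langle E_1,\nabla_{E_2}E_2\rangle=+k_2$ and \eqref{eq:paralleltransport} follows with no adjustment. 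That this is the intended convention is forced by consistency downstream: for the metric \eqref{eq:metric} it gives $k_1=-\partial_{x_2}\sqrt{a_{11}}/\sqrt{a_{11}a_{22}}$ and $k_2=+\partial_{x_1}\sqrt{a_{22}}/\sqrt{a_{11}a_{22}}$, which is precisely what makes $(k_1\sqrt{a_{11}})_{x_2}-(k_2\sqrt{a_{22}})_{x_1}=\sqrt{a_{11}a_{22}}\,K$ in Lemma \ref{lemma.K}; with your $k_2$ the ``magnetic'' term would come out wrong. (A quick sanity check: on the unit sphere with $x_1$ the colatitude, $k_1=0$, $k_2=\cot x_1$, and \eqref{eq:paralleltransport} gives the holonomy $-2\pi\cos x_1$ around a parallel, as it should.) So the method is sound, but you must fix the definition of $k_2$ rather than leave it ``up to sign,'' after which the final substitution is forced; also, the explicit formula you sketch for $k_1$ in the alternative route carries a spurious factor of $1/\sqrt{a_{11}}$.
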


\begin{rem}
A more geometric form of   (\ref{eq:paralleltransport}) (which we do not use) is 
\begin{equation} 
	d\theta = -k_1\,ds_1-k_2\,ds_2, 
	\label{eq:paralleltransport1}
\end{equation}   
where $ ds_i $ are the length elements along the coordinate curves. Equivalently, 
\begin{equation} 
	 \theta ^\prime  =-k_1\cos \varphi -k_2\sin \varphi, 
	\label{eq:paralleltransport2}
\end{equation}  
where $ \varphi $ is the angle between the curve $x(t)$ and the coordinate lines $\{x_2 = {\rm const.}\}$ 

\end{rem}
 \begin{figure}[thb]
 	\captionsetup{format=hang}
	\center{  \includegraphics{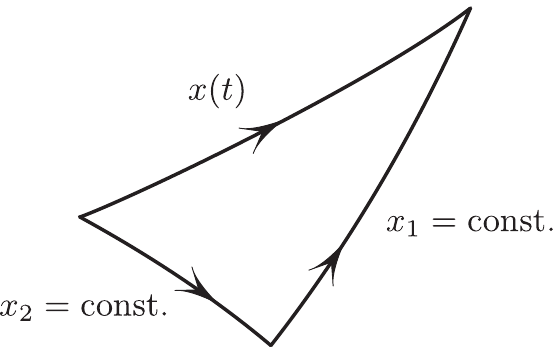}}
	\caption{Towards the proof of   (\ref{eq:paralleltransport}).}
	\label{fig:paralleltransport}
\end{figure}

%

\begin{proof}   Figure \ref{fig:paralleltransport} shows an infinitesimal triangle whose two sides are lines $ x_1 = {\rm const.} $, $ x_2 = {\rm const.} $,  
and whose ``hypotenuse" is a segment of $ x(\cdot) $  along which we want to transport. Transporting a tangent vector along the curve $ x(\cdot) $ 
we get $ \Delta _{hyp}\theta $ (the change of the angle it forms with the lines $ x_2 = {\rm const.} $,  the quantity we are interested in). But by the Gauss--Bonnet theorem  
\[
	\Delta _{hyp}\theta = \Delta_1\theta + \Delta _ 2 \theta + O(dx_1 dx_2),   
\] 
where $ \Delta_i \theta $ denote  the change of $\theta$ under parallel transport along the ``legs" of the triangle in 
Figure~\ref{fig:paralleltransport} in the direction of the arrows.   Using 
\[
	\Delta_ 1 \theta = -k_1\,ds_1, \  \  \hbox{where}  \  \    ds_1=  \sqrt{ a_{11}}\,dx_1, 
\] 
and similarly for the other curve leads to   (\ref{eq:paralleltransport1}) and thus to (\ref{eq:paralleltransport}).
\end{proof}

 According to  (\ref{eq:paralleltransport}), the {\it  axial} component of the  angular  velocity of the disk is 
\begin{equation} 
	\omega_a=   \dot \theta + k_ 1  \sqrt{ a_{11}}  \dot x_1  + k_ 2   \sqrt{ a_{22}}  \dot x_2     \buildrel{def}\over{=}  \dot\theta + f(x) \dot x, 
	\label{eq:omega.r}
\end{equation}   
where
\[
	x= (x_1, x_2)  \  \  \hbox{and}  \  \ f(x) = (k_1 \sqrt{ a_{11}} ,  k_2 \sqrt{ a_{22}}).
\] 
Indeed, according to the lemma, the right--hand side of   (\ref{eq:omega.r}) measures the mismatch with parallel transport.  
 The kinetic energy of the disk in these coordinates is   
 \begin{equation} 
	 E=  \frac{1}{2}I_a( \dot\theta + f(x) \dot x)^2 + 
	 \underbrace{\frac{1}{2} \big( m\langle A\dot x, \dot x \rangle + I_d h(\dot x, \dot x) \big)}_{T=T(x, \dot x )},
	\label{eq:Edisk1}
\end{equation}   
where $h$ is the second fundamental form of the surface.  The term $T$ is the kinetic energy of the non-spinning disk; if $I_d h(\dot x, \dot x)$ can be neglected then $T$ is simply the kinetic energy of the point mass $m$. 

The Euler--Lagrange equations for the Lagrangian $E$  are
\begin{equation} 
	 \frac{d}{dt} ( \dot\theta + f(x) \dot x)=0
	\label{eq:el.theta}
\end{equation}   
 and
 \begin{equation} 
	 \frac{d}{dt}  T_{ \dot x }-T_x + I_a \biggl( ( \dot\theta + f(x) \dot x)(f_x-f_x^T) \dot x + \frac{d}{dt} ( \dot\theta + f(x) \dot x)\biggl) =0. 
	\label{eq:el.x}
\end{equation}   
According to \eqref{eq:el.theta}, $  \dot\theta + f(x) \dot x$ is constant along any solution---this is simply the angular velocity of the disk around its axle. Fixing an arbitrary value $ \omega _a$ of this constant, we limit our attention to the invariant subspace of the phase space satisfying 
\[
	\dot\theta + f(x) \dot x = \omega_a.
\] 
 For such solutions \eqref{eq:el.x} reduces to 
 \begin{equation} 
	\frac{d}{dt}  T_{ \dot x }-T_x + L  (f_x-f_x^T) \dot x =0, 
	\label{eq:el.x1}
\end{equation}   
where $  L=I_a \omega_a $, with the skew-symmetric matrix
 \begin{equation} 
	 f_x-f_x^T =- \underbrace{\biggl(   (k_1  \sqrt{ a_{11}})_{x_2} -(k_2  \sqrt{ a_{22}})_{x_1} 
	  \biggl)}_{\widehat{K(x)}}  \underbrace{\begin{pmatrix} 0 & -1 \\ 1 & 0 \end{pmatrix}}_{J} .
	\label{eq:B}
\end{equation}   

\begin{lemma}\label{lemma.K}
With the above assumptions and notations, 
\begin{equation} 
	\widehat{K(x)} = \sqrt{a_{11}a_{22}}\,K(x), 
	\label{eq:lemma.K}
\end{equation}   
where $K$ is the Gaussian curvature of the surface. 
\end{lemma}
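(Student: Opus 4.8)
The plan is to read the skew-symmetric part of $f_x$ in \eqref{eq:B} as the exterior derivative of a connection form. By Lemma~\ref{lem:pt} a tangent vector is parallel along $x(t)$ precisely when $\dot\theta = -f(x)\dot x$, so the $1$-form $f\cdot dx = k_1\,ds_1 + k_2\,ds_2$ is, up to orientation, the connection form of the Levi--Civita connection in the coordinate frame, and the holonomy angle around any loop equals $-\oint f\cdot dx$. Since
\[
	d\big(f\cdot dx\big) = \Big((k_2\sqrt{a_{22}})_{x_1} - (k_1\sqrt{a_{11}})_{x_2}\Big)\,dx_1\wedge dx_2 = -\widehat{K(x)}\,dx_1\wedge dx_2 ,
\]
the identity \eqref{eq:lemma.K} is equivalent to $d(f\cdot dx) = -K\,dA$, where $dA = \sqrt{a_{11}a_{22}}\,dx_1\wedge dx_2$ is the area form --- the classical fact that the curvature of the Levi--Civita connection is $-K$ times the area form. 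I would establish this from the Gauss--Bonnet theorem, the same tool already used to prove Lemma~\ref{lem:pt}.

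To do so, fix a point $P = (x_1,x_2)$ and apply Gauss--Bonnet to the coordinate rectangle $R$ with vertices $(x_1,x_2)$, $(x_1+\varepsilon,x_2)$, $(x_1+\varepsilon,x_2+\delta)$, $(x_1,x_2+\delta)$. Orthogonality of the coordinate net makes all four corners right angles, so the exterior angles sum to $4\cdot\frac{\pi}{2}=2\pi$ and Gauss--Bonnet collapses to $\iint_R K\,dA = -\oint_{\partial R} k_g\,ds$. Along each horizontal edge $\{x_2=\mathrm{const}\}$ the geodesic curvature is $\pm k_1$ with $ds=\sqrt{a_{11}}\,|dx_1|$, and along each vertical edge $\pm k_2$ with $ds=\sqrt{a_{22}}\,|dx_2|$, the sign chosen according to the direction of traversal; keeping track of these signs shows $\oint_{\partial R} k_g\,ds = \oint_{\partial R} f\cdot dx$, which by Stokes equals $\iint_R d(f\cdot dx) = -\iint_R \widehat{K(x)}\,dx_1\wedge dx_2$. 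Hence $\iint_R K\,dA = \iint_R \widehat{K(x)}\,dx_1\wedge dx_2$; since $dA = \sqrt{a_{11}a_{22}}\,dx_1\wedge dx_2$, dividing by $\varepsilon\delta$ and letting $\varepsilon,\delta\to0$ yields $\widehat{K(P)} = \sqrt{a_{11}a_{22}}\,K(P)$.

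A purely computational alternative sidesteps Gauss--Bonnet: inserting the classical Liouville formulas
\[
	k_1 = -\frac{(a_{11})_{x_2}}{2\,a_{11}\sqrt{a_{22}}},\qquad k_2 = \frac{(a_{22})_{x_1}}{2\,a_{22}\sqrt{a_{11}}}
\]
for the geodesic curvatures of the coordinate curves turns $\widehat{K(x)}$ into $-\frac12\big[\big((a_{11})_{x_2}/\sqrt{a_{11}a_{22}}\big)_{x_2} + \big((a_{22})_{x_1}/\sqrt{a_{11}a_{22}}\big)_{x_1}\big]$, which is exactly $\sqrt{a_{11}a_{22}}$ times the standard orthogonal-coordinate (Brioschi) expression for $K$. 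The price is having to quote those two formulas instead of deriving them.

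The genuine difficulty is not the analysis --- both routes are short --- but the sign bookkeeping: \eqref{eq:lemma.K} is sign-definite, so one must pin down consistently the orientation of $(x_1,x_2)$, the sign convention for $k_1,k_2$ inherited from Lemma~\ref{lem:pt}, the direction in which $\partial R$ is traversed, and the sign conventions for $k_g$ and $K$ in Gauss--Bonnet. In effect the whole proof reduces to verifying that the minus sign in $d(f\cdot dx) = -\widehat{K}\,dx_1\wedge dx_2$ matches the minus sign in $d(f\cdot dx) = -K\,dA$; with the conventions already in force these agree, and \eqref{eq:lemma.K} follows.
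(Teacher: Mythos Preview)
Your argument is correct and is essentially the paper's own proof: apply Gauss--Bonnet to a small coordinate rectangle, use orthogonality to cancel the corner terms, then shrink the rectangle to a point. The only cosmetic difference is that you package the boundary integral via Stokes on $d(f\cdot dx)$, whereas the paper computes $\oint k\,ds$ by pairing opposite sides and passing to the difference quotient directly; your Liouville/Brioschi alternative is a genuinely different (purely computational) route, but it is offered only as an aside.
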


\begin{proof}
Assume $ x=(0,0)$ without loss of generality. Consider a patch on the surface corresponding to 
 $ x_1\in [0,\varepsilon ] $ and $ x_2\in [0,\delta ] $ for small $\varepsilon$, $\delta$. The Gauss--Bonnet formula applied to this patch gives 
\begin{equation} 
	\int_{0}^{\varepsilon } \int_{0}^{\delta } K(x_1,x_2)  \sqrt{a_{11}a_{22}} \,dx_1\,dx_2+ \cancel{4 \frac{\pi }{2}}+ \int k\,ds =\cancel{2 \pi } , 
	\label{eq:gb}
\end{equation} 
where the term $ 4 \frac{\pi }{2} =2 \pi $ comes from the four right angles of the rectangle---here the assumption of the orthogonality of the coordinate system is used---and $k$ is the geodesic curvature. Now $ \int k\,ds $ is the sum of four terms, which we group into pairs corresponding to opposite sides:
\[
	\int k\,ds =  \int_{0}^{\varepsilon } ((a_{11}k_1)(x_1,0)- (a_{11}k_1)(x_1,\delta))  \,dx_1\ +
	\int_{0}^{\delta }((a_{22}k_2)(\varepsilon,x_2) -
	 (a_{22}k_2)(0,x_2))\,dx_2,
\] 
where $(a_{11}k_1)(x,y)$ denotes the product evaluated at $(x,y)$. Substituting this into \eqref{eq:gb}, dividing by $\varepsilon\delta$ and sending $ \varepsilon, \delta\rightarrow 0 $ results in \eqref{eq:lemma.K}.
\end{proof}

Substituting \eqref{eq:lemma.K} into \eqref{eq:B} we reduce the Euler--Lagrange equation \eqref{eq:el.x1} to the form \eqref{eq:eleq}, thus completing the proof of Theorem 1. 

\begin{figure}[thb]
	\captionsetup{format=hang}
	\center{  \includegraphics{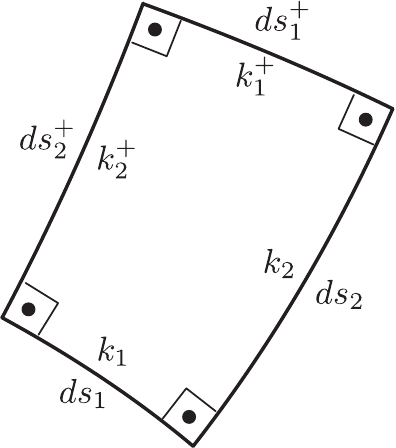}}
	\caption{Illustrating the proof of Lemma \ref{lemma.K} on Gaussian curvature.}
	\label{fig:quadrangle}
\end{figure} 
\vskip 0.1 in

\section{A heuristic explanation of the appearance of Gaussian curvature}
\label{sec:heuristic}
We now give a short heuristic explanation of our result. After that we discuss two special cases,  
stripped of all formalism, to give a physical insight into ``what is really going on." The underlying intuition in all this is the following: if we hold a spinning wheel by 
two ends of its axle and attempt to change its orientation, the axle ``resists" by pushing our hands in the direction {\it  orthogonal} to their motion. An elementary explanation of this can be found in, e.g., \cite{L14}, p. 154.     
\vskip 0.1 in 

\paragraph{\bf An explanation of the magnetic force -- general case.} 

 In the neighborhood of the tangency point with the disk the surface is given by the graph of 
\[
	z =   \frac{1}{2} \left< H{\bf x}, {\bf x} \right> + O( | {\bf x} |^3 ), \  \  H = \begin{pmatrix} a & b \\ b & c \end{pmatrix},\  \  {\bf x} = 
	\begin{pmatrix} x_1 \\ x_2 \end{pmatrix}.
\]

 A unit normal vector to this surface is $ {\bf n} = ( H {\bf x}, -1) + O( |{\bf x}|^2) \in {\mathbb R}  ^3$, 
and $\dot {\bf n}  = (H {\bf v}, 0 ) $ at the instant when the center of the disk is at $ {\bf x} = {\bf 0}$. But $I_a\omega_a {\bf n} = {\bf L} $ is the axial angular momentum,  and thus the torque upon the disk is 
\begin{equation} 
   	\mbox{\boldmath$\tau$}    =   \dot {\bf L}  =    I_a \omega_a  (H {\bf v},0)  . 
		\label{eq:tau}
\end{equation} 
This torque can be attributed to two forces, $ {\bf F}_a$ and $ {\bf F}_b$,  normal to the disk acting at two points $a$ and $b$ whose connecting segment is normal to $ H {\bf v} $, as shown in Figure~\ref{fig:general}. 
 \begin{figure}[thb]
 	\captionsetup{format=hang}
	\center{  \includegraphics{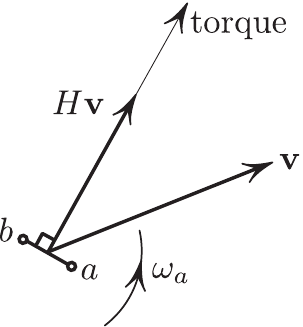}}
	\caption{The force $ {\bf F}_b $ upon the disk points out of the page; $ {\bf F}_a $ points  into the page.}
	\label{fig:general}
\end{figure} 
The resultant force is
\[
	 {\bf F}_b+ {\bf F}_a=({\bf n}_b-{\bf n} _a)F, 
\] 
where ${\bf n}_a $ and $ {\bf n}_b $ are unit normal vectors and $F= |  {\bf F}_a | = |  {\bf F}_b | $. 
The $(x_1,x_2)$-projection of the resultant is therefore 
\begin{equation} 
	({\bf n}_b-{\bf n} _a)_{x_1 x_2}F = H(b-a) F = H \frac{(H {\bf v})^\perp}{|H {\bf v} | } ds\,F = H \frac{JH {\bf v}}{|H {\bf v} | } \tau . 
		\label{eq:resultant1}
\end{equation} 
A direct computation yields $HJH = KJ$, where $K = \det H$ is the Gaussian curvature.
Substituting this and $ \tau = | \mbox{\boldmath$\tau$} | $ from \eqref{eq:tau}  into    (\ref{eq:resultant1})  we get 
 \[
	 ({\bf F}_b+ {\bf F}_a)_{x_1 x_2}=I_a \omega_a (J {\bf v})K = L (J {\bf v})K,
\] 
in agreement with \eqref{eq:mainresult} and Figure~\ref{fig:magforce}. 

The above explanation still relies on a small calculation   (\ref{eq:resultant1}); the following two special cases are treated purely geometrically, with nothing hidden by calculation. 
 
\subsection*{Special case \#1: ${\bf v}$ is tangent to a line of principal curvature, Figure \ref{fig:heuristic.ellipsoid}(a)} 
\vskip 0.1 in 

\begin{figure}[thb]
 \captionsetup{format=hang}
	\center{  \includegraphics{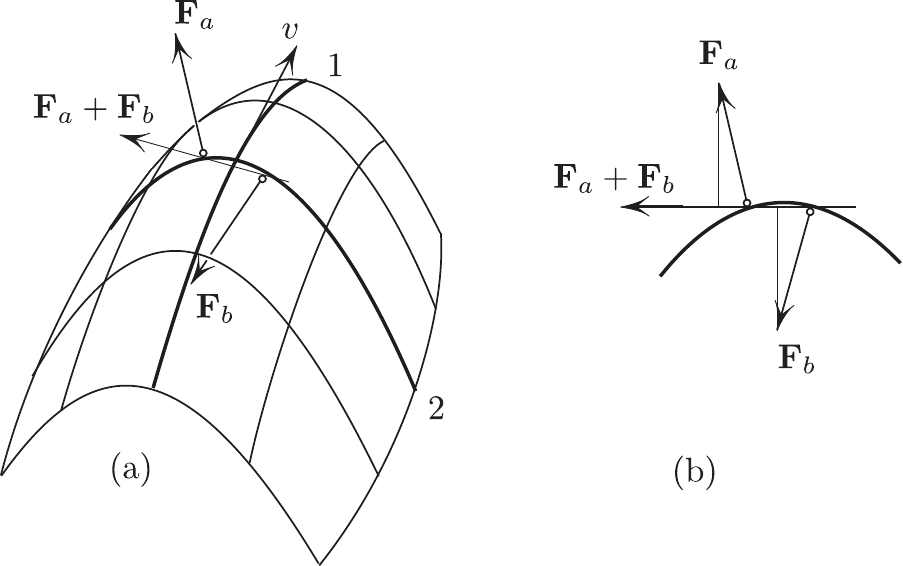}}
	\caption{The vector ${\bf v}$ is tangent to a line of principal curvature. The torque upon the disk is equivalent to the torque of a couple $ {\bf F}_a, {\bf F}_b $ exerted upon the disk at two points a small distance $ ds$ apart.}
	\label{fig:heuristic.ellipsoid}
\end{figure} 
With the velocity ${\bf v}$ pointing in  a principal direction, the disk undergoes an additional rotation around the diameter tangent to the other principal direction (line $2$ in Figure~\ref{fig:heuristic.ellipsoid}(a)) with the angular velocity $ \omega_r =v\kappa_1 $, where $ \kappa_1 $ is the principal curvature  in the direction ${\bf v}$.   As the result of this reorientation, the disk exerts the gyroscopic torque\footnote{see, e.g., \cite{L14}.}
 \begin{equation} 
	 \tau =  L v \kappa_1
	\label{eq:torque}
\end{equation} 
against the tangency constraint (this can be seen from   (\ref{eq:tau})), and the surface reacts with an equal and opposite torque, which can be considered to be due to the torque of two forces, $ {\bf F}_a $ and $ {\bf F}_b $, applied by the surface to the disk, as in Figure~\ref{fig:heuristic.ellipsoid}(a). The torque of these forces points into  the page in Figure~\ref{fig:heuristic.ellipsoid}(b). 
\begin{figure}[thb]
 	\captionsetup{format=hang} 
	\center{  \includegraphics{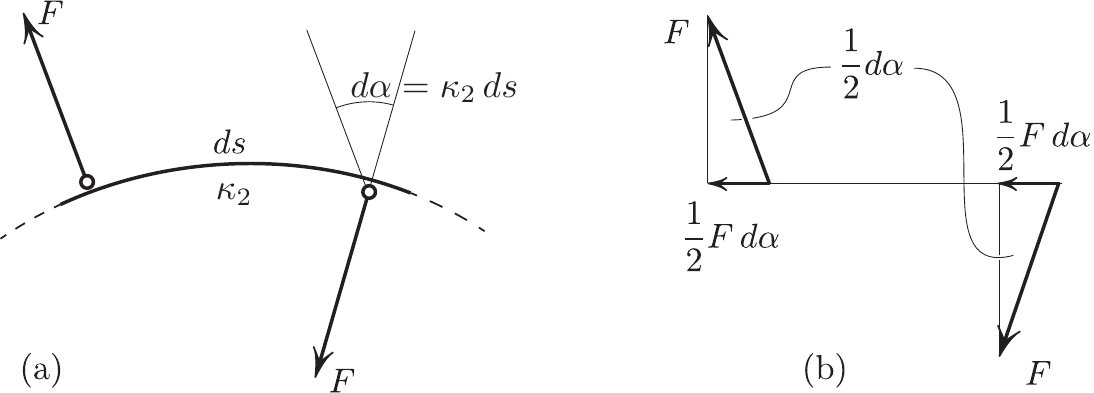}}
	\caption{The origin of the ``magnetic" force. (a) The torque $\tau$  which the surface exerts upon the disk can be equivalently represented by the torque of two forces   at an infinitesimal distance $ ds$; the result \eqref{eq:F.deflection} does not depend on $ ds$. (b) Computing $ F_{\rm deflection} $.}
	\label{fig:heuristic}
\end{figure} 
Figure~\ref{fig:heuristic.ellipsoid}(b) explains the key point: the sum of these reaction forces points to the left in the figure, i.e. is normal to ${\bf v}$. This is precisely the  ``magnetic force" mentioned earlier. 

We now compute the magnitude of this force, referring to the magnified view of Figure~\ref{fig:heuristic}(b). We have  
\begin{equation} 
	F_{\rm deflection}=F\,d \alpha = F\, \kappa_2\,ds,
	\label{eq:F}
\end{equation}  
where $\kappa_2$ is the curvature of curve $ 2 $ in Figure~\ref{fig:heuristic.ellipsoid}. But
\[
	  F\,ds = \tau \buildrel{(\ref{eq:torque})}\over{=}   L v\kappa_1.
\] 
Substituting this into \eqref{eq:F} gives 
\begin{equation} 
	F_{\rm deflection}=L v\kappa_1\kappa_2 = LvK, 
	\label{eq:F.deflection}
\end{equation} 
which agrees with Figure~\ref{fig:magforce} and coincides with the general formula derived in Sections \ref{sec:classical} and \ref{sec:coordinate.free}.
 
As an example, for the cylinder or for the cone we have $ F_{\rm deflection}=0 $. In general the sign of $K$  affects the direction (``left" or ``right" of ${\bf v}$) of the deflection force. 
\vskip 0.2 in 

\subsection*{Special case \#2:  ${\bf v}$ points along a straight line on a ruled surface of negative curvature}   

 \begin{figure}[thb]
 	\captionsetup{format=hang}
	\center{  \includegraphics{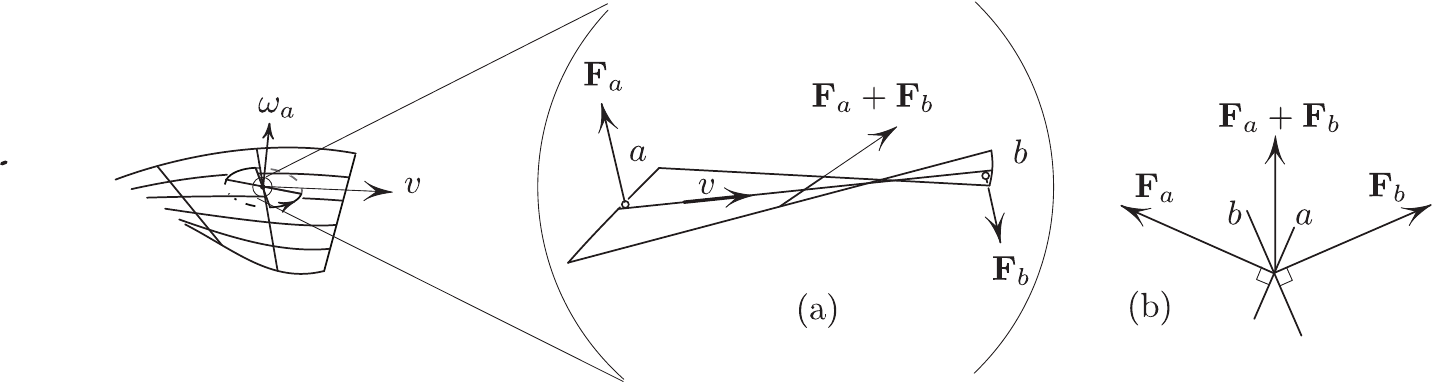}}
	\caption{The velocity ${\bf v}$ points along the straight line on a ruled surface of negative curvature. In (b), ${\bf v}$ points into the page.}
	\label{fig:heuristic.saddle}
\end{figure} 
When ${\bf v}$ points along a straight geodesic (Figure~\ref{fig:heuristic.saddle}), the disk's plane undergoes instantaneous rotation around the diameter aligned with this geodesic (in addition to the axial spin). The resulting gyroscopic effect causes the disk to apply a torque to the  surface; the surface reacts with the equal and opposite   torque, which can be thought of being due  two  point forces $ {\bf F}_a $ and ${\bf F}_b $ acting at
two points $a$ and $b$ on the geodesic (this torque tries to twist the disk around the diameter perpendicular to ${\bf v}$). Figure~\ref{fig:heuristic.saddle}(b) shows that these forces have a nonzero resultant, {\it  due to the surface's twisting}, and  that this resultant points normal to  ${\bf v}$. It remains to confirm that the magnitude of this force is given by (\ref{eq:F.deflection}).

Let us choose coordinates so that the straight geodesic is the $x$-axis, and the $z$-axis is normal to the surface at the point in question. Any ruled surface satisfies
\[
	z= \kappa xy + o(x ^2 + y ^2 )
\]    
for some constant $\kappa$. The Gaussian curvature of this surface at $(0,0,0) $  is   $ K = - \kappa ^2 $.  
We are interested in the resultant $ {\bf F}_a +{\bf F}_b $; its magnitude is  
\begin{equation} 
	F \,d\alpha = F \kappa \,dx, 
	\label{eq:resultant}
\end{equation}   
where $ F= | {\bf F}_a|=| {\bf F}_a|$, $\alpha$ is the angle between the normals at $a$ and $b$ and $x$  is the distance along the geodesic. We find $F$ from the torque condition, as we did before in   (\ref{eq:torque}): 
\[
	\tau = F\,dx=L \omega_r=L \kappa v. 
\] 
We conclude that the resultant   (\ref{eq:resultant})  is  
\[
	F \kappa \,dx = L \kappa ^2  v = - L  Kv, 
\] 
in agreement with the general case, as claimed.

\section{The equations of motion: coordinate-free version}
\label{sec:coordinate.free}
 In this section we outline an alternative   proof of Theorem \ref{thm:coordinate.free}, and thus of Theorem \ref{thm:coordinate}. As opposed to the proof given above, this one is variational and coordinate--free. 
 
 As in Section \ref{sec:intrinsic}, we suppose the center of the disk follows a curve $\gamma$ on the surface, with velocity $\dot\gamma$ and acceleration $D_t\dot\gamma$. We fix a   unit vector $u$ pointing from the center of the disk to the distinguished point on the boundary, so the configuration space of the spinning disk is the unit tangent bundle of the surface.

By the argument of Lemma \ref{lem:pt}, the axial component of the angular velocity is $\omega_a = \left<D_t u, Ju\right>$, and hence the small, rapidly-spinning disk has kinetic energy
\begin{align}
	E(\gamma,u) = \frac12 m|\dot\gamma|^2 + \frac12 I_a |D_t u|^2.
\end{align}
 
For a smooth curve $(\gamma,u)$   in the unit tangent bundle\footnote{That is, $\gamma$ is a curve on the surface and $u$ is a curve of unit tangent vectors such that $u(t)$ is in the tangent plane at $\gamma(t)$ for each time $t$.} we define the action integral
\[
	\mathcal{S}(\gamma,u) = \frac12 \int_a^b \left(m |\dot \gamma|^2 + I_a |D_t u|^2 \right) dt.
\]
The desired equations of motion are precisely the variational (Euler--Lagrange) equations for $\mathcal{S}$.

To compute the variation, let $(\gamma_s(t), u_s(t))$ denote a smooth family of curves in the unit tangent bundle, with  
$(\gamma_0(t), u_0(t))=
(\gamma(t), u(t)) $.   Define the variation fields $V = \p_s \gamma_s$ and $w = D_s u$.
Substituting $ \gamma_s, \  u_s $ into the action integral and differentiating the first term in the integrand,  we have
\begin{align}\label{CMvar}
	\frac12 \frac{d}{ds} |\dot\gamma_s|^2 = \left<D_s \dot\gamma, \dot\gamma\right>
	= \left<D_t V, \dot\gamma\right>.
\end{align}

For the second term in  we compute at $ s=0 $: 
\[
	\frac12 \frac{d}{ds} |D_t u_s|^2 = \left<D_s D_t u, D_t u\right> 
	= \left<D_t D_s u, D_t u\right> + \left<R(V,\dot\gamma)u, D_t u\right>,
\]
where $R$ is the Riemann curvature operator.  From the definition of $w$ we obtain $\left<D_t D_s u, D_t u\right> = \left<D_t w, D_t u\right>$. Next, using the formula for the Riemann curvature tensor on a surface  we compute  
\begin{align*}
	\left<R(V,\dot\gamma)u, D_t u\right> 
	&= K \left( \left<V, D_t u\right>\left<\dot\gamma, u\right> - \left<V, u\right>\left<\dot\gamma, D_t u\right> \right) \\
	&= \big<V, K \left( \left<\dot\gamma, u\right>D_t u - \left<\dot\gamma, D_t u\right>u \right)\big>.
\end{align*}
Substituting  $D_t u = \omega_a Ju$, we obtain
\begin{align*}
	\left<\dot\gamma, u\right>D_t u - \left<\dot\gamma, D_t u\right>u &= \omega_a \left[ \left<\dot\gamma, u\right>Ju - \left<\dot\gamma, Ju\right> u \right] \\
	&= \omega_a J \left[ \left<\dot\gamma, u\right>u + \left<\dot\gamma, Ju\right> Ju \right] \\
	&= \omega_a J \dot\gamma,
\end{align*}
since $(u,Ju)$ is an orthonormal basis. It follows that
\begin{align}\label{rotvar}
	\frac12 \frac{d}{ds} |D_t u_s|^2 = \left<D_t u, D_t w\right> + \left<V, \omega_a K J \dot\gamma \right>.
\end{align}

Combining \eqref{CMvar} and \eqref{rotvar} and integrating by parts, we obtain
\[
	\left.\frac{d}{ds} E(\gamma_s,u_s)\right|_{s=0} = \int_a^b \left( \big<V, I_a \omega_a K J \dot\gamma - m D_t \dot\gamma \big> - I_a \left<w, D_t^2 u\right> \right)dt
\]
as long as the variations $V$ and $w$ vanish at the endpoints $t = a,b$. Since $w$ and $V$ are arbitrary, the critical points must satisfy
\begin{align*}
	m D_t \dot\gamma &= I_a \omega_a K J \dot\gamma \\
	D_t^2 u &= 0.
\end{align*}
The first equation is precisely \eqref{eqn:intrinsic}, with $L = I_a \omega_a$. To complete the proof of Theorem \ref{thm:coordinate.free}, we just need to show that $\omega_a$ is constant. This is the case because $|\omega_a| = |D_t u|$   and
\[
	\frac{d}{dt} |D_t u|^2 = 2 \left<D_t^2 u, D_t u\right> = 0.
\]

\paragraph {\bf Acknowledgements} The authors are grateful for the many helpful comments by the referee, in particular for stimulating us to add an application, resulting in Section 2.3 on the Lagrange top. 

\bibliographystyle{plain}
\bibliography{top}
 \end{document}